%
% file:         without-unit.tex
% created:      pasha dec 2,16,22-31 2007,jan 1-3,6,feb 2,17,dec 9,11-12,14 2008
% modified:     pasha dec 26 2017
% modification: - uptodate layout and email address
%               - footnote on p.10 about asymmetric 2-cocycles
%

%%%%%%%%%%%%%%%%%%%%%%%%%%%%%%%%%%%%%%%%%%%%%%%%%%%%%%%%%%%%%%%%%%%%%%%
\documentclass[12pt,a4paper,oneside]{amsart}
\usepackage[a4paper]{geometry}
\geometry{hmargin=1.8cm,top=2cm,bottom=2.5cm}

\usepackage{mathptmx} % Times font
\DeclareMathAlphabet{\mathcal}{OMS}{cmsy}{m}{n} % but do not change mathcal symbols

\newtheorem{theorem}{Theorem}
\newtheorem{lemma}{Lemma}
\theoremstyle{remark}
\newtheorem{remark}{Remark}

% to force enumerate with roman numbers

% to display sets { ... | ... }
\newcommand{\set}[2]{\ensuremath{\{ #1 \>|\> #2 \}}}

\usepackage{perpage} % needed for footnotes below
% redefine footnotes to use symbols, not numbers

% start footnotes with symbol 2
\MakePerPage[2]{footnote}

\hyphenation{al-geb-ras se-mi-sim-ple su-per}
%%%%%%%%%%%%%%%%%%%%%%%%%%%%%%%%%%%%%%%%%%%%%%%%%%%%%%%%%%%%%%%%%%%%%%%

\begin{document}

\title[Lie algebras extended over commutative algebras without unit]
{Invariants of Lie algebras extended over commutative algebras without unit}
\author{Pasha Zusmanovich}
\address{}
\email{pasha.zusmanovich@gmail.com}
\date{last revised December 26, 2017}
\thanks{arXiv:0901.1395;
J. Nonlin. Math. Phys. \textbf{17} (2010), Suppl. 1 (special issue in memory of F.A. Berezin), 87--102.}

\begin{abstract}
We establish results about the second cohomology with coefficients
in the trivial module, symmetric invariant bilinear forms, and
derivations of a Lie algebra extended over a commutative associative
algebra without unit. These results provide a simple unified
approach to a number of questions treated earlier in completely
separated ways: periodization of semisimple Lie algebras (Anna
Larsson), derivation algebras, with prescribed semisimple part, 
of nilpotent Lie algebras (Benoist), and presentations of affine
Kac-Moody algebras.
\end{abstract}

\keywords{Current Lie algebra; Kac-Moody algebras; second
cohomology; invariant bilinear form; derivation.}

\subjclass[2000]{17B20, 17B40, 17B55, 17B56, 17B67}

\maketitle

\section*{Introduction}

In this paper we consider current Lie algebras, i.e., Lie algebras of
the form $L\otimes A$, where $L$ is a Lie algebra, $A$ is a
commutative associative algebra, and the multiplication in $L\otimes
A$ being defined by the formula
\begin{equation*}
[x \otimes a, y\otimes b] = [x,y] \otimes ab
\end{equation*}
for any $x, y\in L$, $a, b\in A$. Note that $A$ can be considered as the
algebra of functions on the spectrum of $A$, and $L\otimes A$ can
therefore be interpreted as the algebra of ``currents'', as
physicists say, on this spectrum.

Berezin and Karpelevich \cite{berezin} were among the first to study certain class 
of representations of current algebras over local finite dimensional 
commutative algebras $A$. In that, somewhat obscure, paper they 
showed that cohomology of such algebras can be reduced to cohomology of $L$.
In what follows we will consider other types of commutative algebras $A$ and the 
description of cohomology in our case is more involved and interesting.
We are primarily interested in the second cohomology of $L\otimes A$ with
trivial coefficients, the space of symmetric invariant bilinear
forms on $L\otimes A$, and the algebras of  derivations of $L\otimes
A$. 

These invariants were determined for numerous particular cases
of current Lie algebras (see, for example, \cite{santharoubane}),
the general formulae for the second homology with trivial
coefficients in terms of invariants of $L$ and $A$ were obtained in
\cite{haddi}, \cite{asterisque} and \cite{neeb}, and similar
formulae for the space of symmetric invariant bilinear forms and
derivation algebras were obtained in \cite{asterisque} and
\cite{low}, respectively.

So why return to these settled questions? In all considerations
until now, the algebra $A$ was supposed to have a unit. However,
there are many interesting examples of current algebras where $A$ is
not unital. For example, in \cite{larsson}, the so-called
periodization of semisimple Lie algebras $\mathfrak g$ was
considered, which is nothing but $\mathfrak g\otimes t\mathbb C[t]$.
It is known that the second homology of any nilpotent Lie algebra
with trivial coefficients has interpretation in terms of
presentation of the algebra, so allowing $A$ to be nilpotent allows
us to obtain presentation of $L\otimes A$ irrespective of the
properties of $L$.
% mitya: and this can be further used to deduce presentation of $L$?

It turns out that elementary arguments similar to those in
\cite{low} allow us to extend the above mentioned results to the
case of non-unital $A$. In particular, concerning the second
cohomology and symmetric invariant bilinear forms, we provide
another proof, considerably shorter than all the previous ones even
in the case of unital $A$.

The contents of this paper are as follows. In \S\S 1--3 we establish
the general formulae for $2$-cocycles, symmetric invariant bilinear
forms, and get partial results about derivations of the current Lie
algebras, respectively. This is followed by applications: in \S 4 we
reprove the result from \cite{larsson} about presentations of
periodizations of the semisimple Lie algebras. In passing, we also
mention how to derive from our results the theorem from
\cite{benoist} about semisimple components of the derivation
algebras of certain current Lie algebras, and the Serre defining
relations between Chevalley generators of the non-twisted affine
Kac-Moody algebra.

In all these cases, the absence of unit in $A$ is essential. All
these proofs are significantly shorter than the original ones, and
reveal various almost trivial, but so far unnoticed or unpublished,
links between different concepts and results. These links are,
perhaps, the main virtue of this paper.

It seems that most if not everything considered here can be extended in a
straightforward way to twisted, Leibniz and super settings, but we
will not venture into this, at least for now.

\section*{Notation and conventions}

All algebras and vector spaces are defined over an arbitrary field $K$
of characteristic different from 2 and 3, unless stated otherwise
(some of the results are valid in characteristic $3$, but we will
not go into this).

In what follows, $L$ denotes a Lie algebra, $A$ is an associative
commutative algebra.

Given an $L$-module $M$, let $B^n(L,M)$, $Z^n(L,M)$, $C^n(L,M)$ and
$H^n(L,M)$ denote the space of $n$th degree coboundaries, cocycles,
cochains, and cohomology of $L$ with coefficients in $M$,
respectively (we will be mainly interested in the particular cases
of degree 2 and the trivial module $K$, or degree 1 and the adjoint
module or its dual). Note that $C^2(L,K)$ is the space of all
skew-symmetric bilinear forms on $L$. The space of all symmetric
bilinear forms on $L$ will be denoted as $S^2(L,K)$.

Let $\mathcal Z(L)$, $[L,L]$ and $Der(L)$ denote the center, the commutant
(the derived algebra), and the Lie algebra of derivations of $L$,
respectively. Similarly, let $Ann(A) = \set{a\in A}{Aa = 0}$ and $AA$
denote the annulator and the square of $A$, respectively, and let
$HC^*(A)$ denotes its cyclic cohomology.

A bilinear form $\varphi:L \times L \to K$ is said to be
\textit{cyclic} if
\begin{equation*}
\varphi([x,y],z) = \varphi([z,x],y)
\end{equation*}
for any $x,y,z\in L$. Note that if $\varphi$ is symmetric, this
condition is equivalent to the \textit{invariance} of the form
$\varphi$:
\begin{equation*}
\varphi([x,y],z) + \varphi(y,[x,z]) = 0 ,
\end{equation*}
while if $\varphi$ is skew-symmetric, the notions of cyclic and
invariant forms differ.
Let $\mathcal B(L)$ denote the space of all symmetric bilinear invariant
(=cyclic) forms on $L$. 

Similarly, a bilinear form $\alpha:A \times
A \to K$ is said to be \textit{cyclic} if
\begin{equation*}
\alpha(ab,c) = \alpha(ca,b) ,
\end{equation*}
and \textit{invariant} if
$$
\alpha(ab,c) = \alpha(a,bc)
$$
for any $a,b,c\in A$. 
If the form $\alpha$ is symmetric, it is cyclic if and only if it is invariant.

\section{The second cohomology}\label{cohomology}

\begin{theorem}\label{h2}
Let $L$ be a Lie algebra, $A$ an associative commutative algebra,
and at least one of $L$ and $A$ be finite-dimensional. Then each
cocycle in $Z^2(L\otimes A,K)$ can be represented as the sum of
decomposable cocycles $\varphi\otimes \alpha$, where $\varphi:
L\times L \to K$ and $\alpha: A\times A\to K$ are of one of the
following $8$ types:
\begin{enumerate}
\item
$\varphi([x,y],z) + \varphi([z,x],y) + \varphi([y,z],x) = 0$ and
$\alpha$ is cyclic,
\item
$\varphi$ is cyclic and $\alpha(ab,c) + \alpha(ca,b) + \alpha(bc,a)
= 0$,
\item $\varphi ([L,L],L) = 0$,
\item $\alpha (AA,A) = 0$,
\end{enumerate}
where each of these $4$ types splits into two subtypes: with
$\varphi$ skew-symmetric and $\alpha$ symmetric, and with $\varphi$
symmetric and $\alpha$ skew-symmetric.
\end{theorem}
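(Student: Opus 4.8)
The plan is to analyze an arbitrary 2-cocycle $\psi \in Z^2(L\otimes A, K)$ by restricting attention to its values on decomposable tensors, since $\psi$ is a skew-symmetric bilinear form on $L\otimes A$ and hence is determined by the numbers $\psi(x\otimes a, y\otimes b)$. The first move is to fix bases (or use the finite-dimensionality of one factor) and write $\psi(x\otimes a, y\otimes b) = \sum_i \varphi_i(x,y)\,\alpha_i(a,b)$ as a finite sum of decomposable forms; this is just the statement that an element of $(L\otimes A)^* \otimes (L\otimes A)^*$ lies in a finite sum of tensors $\varphi\otimes\alpha$ with $\varphi \in C^2(L,K)\oplus S^2(L,K)$ and $\alpha \in C^2(A,K)\oplus S^2(A,K)$. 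Because the overall form $\psi$ is skew, each decomposable piece must pair a skew $\varphi$ with a symmetric $\alpha$ or a symmetric $\varphi$ with a skew $\alpha$; this already explains the split of each type into the two stated subtypes.

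The heart of the argument is to impose the cocycle condition. Writing out $d\psi = 0$ on three decomposable elements $x\otimes a$, $y\otimes b$, $z\otimes c$ gives
\begin{equation*}
\psi([x,y]\otimes ab,\, z\otimes c) + \psi([z,x]\otimes ca,\, y\otimes b) + \psi([y,z]\otimes bc,\, x\otimes a) = 0 .
\end{equation*}
Substituting the decomposition turns this into an identity that is a sum of products of the form (value of $\varphi$ on a bracket) times (value of $\alpha$ on a product). The goal is to show that, after possibly regrouping, every decomposable summand can be chosen to satisfy one of the four pairs of conditions. I would exploit the cyclic/non-cyclic dichotomy: the expression $\varphi([x,y],z) + \varphi([z,x],y) + \varphi([y,z],x)$ is the total skew-symmetrization measuring failure of the Jacobi-type identity (type (i)), while $\alpha(ab,c)+\alpha(ca,b)+\alpha(bc,a)$ is its associative-commutative analogue (type (ii)). Types (iii) and (iv) capture the degenerate pieces that vanish on the commutant or on the square. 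The key structural step is to decompose $C^2(L,K)$ and $S^2(L,K)$ (and likewise for $A$) according to these conditions and show the cocycle identity forces each joint component into one of the admissible combinations.

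The main obstacle, as I see it, is the bookkeeping of how the mixed terms in the expanded cocycle identity separate. Because $L$ and $A$ contribute independently but are coupled through the single scalar equation above, I expect the cleanest route is to regard the identity as living in a tensor product and to use a separation-of-variables argument: since one of $L, A$ is finite-dimensional, one can pick dual bases and read off, coefficient by coefficient, that the $L$-part and $A$-part each satisfy a cyclic-type relation (or degenerate on the commutant/square). Concretely, I would group the nine-fold expansion into a part symmetric under the relevant cyclic permutation and a complementary part, and argue that each must vanish separately. The delicate point will be justifying that no ``cross'' contributions survive — i.e. that a term where $\varphi$ is, say, neither cyclic nor supported away from $[L,L]$ cannot be paired with an $\alpha$ in a way that still solves the cocycle equation without being reducible to the four listed types. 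I anticipate this amounts to showing that the obstruction quantities $\varphi([x,y],z)+\text{(cyclic)}$ and $\alpha(ab,c)+\text{(cyclic)}$ must annihilate each other componentwise, which is exactly what produces the four clean cases; handling the interaction with the symmetric-versus-skew split is where the characteristic $\neq 2,3$ hypothesis should enter, since dividing by $2$ and $3$ is what lets one project onto the cyclic and anticyclic parts.
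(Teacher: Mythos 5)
Your setup --- writing $\psi=\sum_{i}\varphi_i\otimes\alpha_i$ via finite-dimensionality of one factor, imposing the cocycle identity on decomposable elements, and trying to separate the $L$-conditions from the $A$-conditions --- is the same as the paper's, but the step you explicitly defer is the entire mathematical content of the proof, and in the form you state it ("the obstruction quantities must annihilate each other componentwise") it is false. From an identity $\sum_{i\in I}F_i\otimes G_i=0$, where $F_i$ is built linearly from $\varphi_i$ and $G_i$ from $\alpha_i$, one cannot conclude that for each $i$ either $F_i=0$ or $G_i=0$: already $F\otimes G+(-F)\otimes G=0$ with $F,G\neq 0$ defeats this. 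Nor does choosing the $\alpha_i$ linearly independent (your "dual bases, coefficient by coefficient" suggestion) repair it, because the derived quantities $G_i$ --- e.g. $\alpha_i(bc,a)-\alpha_i(ca,b)$ --- need not inherit linear independence from the $\alpha_i$. What is needed is a repartition lemma which allows one to \emph{re-choose} the decomposition of $\psi$ so that the index set splits into parts on which one or the other tensor factor genuinely vanishes; this is Lemma 1.1 of \cite{low}, which the paper invokes twice (once for the cocycle identity, once for skew-symmetry), and your proposal contains no substitute for it.

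Three further points where your outline diverges from what actually has to happen. First, to produce exactly the four types one needs \emph{two} identities extracted from the cocycle equation: (a) symmetrization in $x,y$, giving
\begin{equation*}
\sum_{i\in I}\bigl(\varphi_i([x,z],y)+\varphi_i([y,z],x)\bigr)\otimes\bigl(\alpha_i(bc,a)-\alpha_i(ca,b)\bigr)=0,
\end{equation*}
and (b) the cyclic sum over $x,y,z$, giving the product of the two Jacobi-type sums; the four types are the $2\times 2$ combinations obtained by applying the separation lemma to (a) and (b) jointly. Your "nine-fold expansion grouped by cyclic symmetry" gestures only at (b). Second, characteristic $\neq 3$ is not used to project onto cyclic/anticyclic parts: it is used because an index $i$ landing in the part where \emph{both} the symmetrized condition (a) and the cyclic condition (b) hold on the $\varphi$-side forces $3\varphi_i([x,y],z)=0$, i.e. $\varphi_i([L,L],L)=0$, and dually on the $\alpha$-side --- this is precisely how types (iii) and (iv) arise. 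Third, the theorem asserts a sum of decomposable \emph{cocycles}, so one must also check that each piece produced by the partition satisfies the cocycle equation individually; your outline omits this verification.
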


\begin{proof}
Each cocycle $\Phi \in Z^2(L\otimes A,K)$, being an element of
\begin{equation*}
End (L\otimes A \otimes L \otimes A, K) \simeq End(L\otimes L, K)
\otimes End(A\otimes A,K),
\end{equation*}
can be written in the form $\Phi = \mathop{\sum}\limits_{i\in I}
\varphi_i \otimes \alpha_i$, where $\varphi_i: L \times L \to K$ and
$\alpha_i:A \times A \to K$ are bilinear maps, and $I$ is a finite set of indices 
(this is the place where the assumption of finite-dimensionality is needed). 
Using this representation, write the cocycle equation for an arbitrary triple
$x\otimes a$, $y\otimes b$, $z\otimes c$, where $x,y,z\in L$,
$a,b,c\in A$:
\begin{equation}\label{cocycle-eq}
\mathop{\sum}\limits_{i\in I} \varphi_i([x,y],z) \otimes
\alpha_i(ab,c) +
              \varphi_i([z,x],y) \otimes \alpha_i(ca,b) +
              \varphi_i([y,z],x) \otimes \alpha_i(bc,a) = 0.
\end{equation}
Symmetrizing this equality with respect to $x,y$, we get:
\begin{equation*}
\mathop{\sum}\limits_{i\in I} \Big( \varphi_i([x,z],y) +
\varphi_i([y,z],x) \Big) \otimes
              \Big( \alpha_i(bc,a) - \alpha_i(ca,b) \Big) = 0.
\end{equation*}
On the other hand, cyclically permuting $x,y,z$ in
(\ref{cocycle-eq}) and summing up the 3 equalities  obtained, we
get:
\begin{equation*}
\mathop{\sum}\limits_{i\in I} \Big( \varphi_i([x,y],z) +
\varphi_i([z,x],y) + \varphi_i([y,z],x) \Big)
              \otimes
              \Big( \alpha_i(ab,c) + \alpha_i(bc,a) + \alpha_i(ca,b) \Big) = 0.
\end{equation*}
Applying Lemma 1.1 from \cite{low} to the last two equalities, we
get a partition of the index set $I = I_1 \cup I_2 \cup I_3 \cup
I_4$ such that
\begin{align*}
&\varphi_i([x,z],y) + \varphi_i([y,z],x) = 0, \> \varphi_i([x,y],z)
+ \varphi_i([z,x],y) + \varphi_i([y,z],x) = 0
&\text{for } i\in I_1\phantom{.} \\
&\varphi_i([x,z],y) + \varphi_i([y,z],x) = 0, \> \alpha_i(ab,c) +
\alpha_i(bc,a) + \alpha_i(ca,b) = 0
&\text{for } i\in I_2\phantom{.} \\
&\varphi_i([x,y],z) + \varphi_i([z,x],y) + \varphi_i([y,z],x) = 0,
\> \alpha_i(bc,a) - \alpha_i(ca,b) = 0
&\text{for } i\in I_3\phantom{.} \\
&\alpha_i(bc,a) - \alpha_i(ca,b) = 0, \> \alpha_i(ab,c) +
\alpha_i(bc,a) + \alpha_i(ca,b) = 0 &\text{for } i\in I_4.
\end{align*}
It is obvious that if the characteristic of $K$ is different from
$3$, then $\varphi_i([L,L],L) = 0$ for $i\in I_1$, and
$\alpha_i(AA,A) = 0$ for $i\in I_4$. It is obvious also that
$\varphi_i \otimes \alpha_i$ satisfies the cocycle equation
(\ref{cocycle-eq}) for each $i \in I_1,I_2,I_3,I_4$.

Now write the condition of skew-symmetry of $\Phi$:
\begin{equation}\label{skew-symm}
\mathop{\sum}\limits_{i\in I} \varphi_i(x,y) \otimes \alpha_i(a,b) +
\varphi_i(y,x) \otimes \alpha_i(b,a) = 0
\end{equation}
and symmetrize it with respect to $x, y$:
\begin{align*}
&\mathop{\sum}\limits_{i\in I} (\varphi_i(x,y) - \varphi_i(y,x))
\otimes (\alpha_i(a,b) - \alpha_i(b,a)) = 0 \\
&\mathop{\sum}\limits_{i\in I} (\varphi_i(x,y) + \varphi_i(y,x))
\otimes (\alpha_i(a,b) + \alpha_i(b,a)) = 0 .
\end{align*}
From the last two equalities, using again Lemma 1.1 from \cite{low},
we see that each set $I_1,I_2,I_3,I_4$ can be split further into two
subsets, one having skew-symmetric $\varphi_i$ and symmetric
$\alpha_i$, and the other one having symmetric $\varphi_i$ and
skew-symmetric $\alpha_i$.%, and the statement of the theorem
%follows.
\end{proof}

\begin{remark}
As all our bilinear maps are $K$-valued, the cocycles of the form
$\varphi \otimes \alpha$ are, of course, just products of bilinear
maps $\varphi \alpha$. However, we have retained the symbol
$\otimes$, to make it easier to track dependence on the more general
situation of \cite{low}.
\end{remark}

\begin{remark}\label{indep}
The second subdivision in the statement of Theorem \ref{h2}, which
follows from  equality (\ref{skew-symm}), is merely a manifestation
of the vector space isomorphism
\begin{equation}\label{summands1}
C^2(L\otimes A,K) \simeq \left(S^2(L,K) \otimes C^2(A,K)\right)
\>\oplus\> \left(C^2(L,K) \otimes S^2(A,K)\right).
\end{equation}
Let $d\Omega$ be the $2$-coboundary defined by a given linear map
$\Omega: L\otimes A \to K$. The latter can be written in the form
$\Omega = \mathop{\sum}\limits_{i\in I} \omega_i \otimes \beta_i$
for some linear maps $\omega_i:L\to K$ and $\beta_i:A\to K$. Then
\begin{equation*}
d\Omega (x\otimes a, y\otimes b) = \mathop{\sum}\limits_{i\in I}
\omega_i ([x,y]) \otimes \beta_i(ab),
\end{equation*}
i.e., coboundaries always lie in the direct summand $C^2(L,K) \otimes S^2(A,K)$. 
Consequently, nonzero cocycles from different
direct summands in (\ref{summands1}) can never be cohomologically dependent, and the
cocycles from the direct summand $S^2(L,K) \otimes C^2(A,K)$ are
cohomologically independent if and only if they are linearly
independent.
\end{remark}

One may try to formulate Theorem \ref{h2} as a statement about %the second cohomology
$H^2(L\otimes A,K)$, but in full generality this will lead only to
cumbersome %and trivial
complications. In each case of interest, one can easily obtain an
information about the cohomology. For example, assuming $A$ contains
a unit, one immediately sees that cocycles of type (i) necessarily
have $\varphi$ skew-symmetric and $\alpha$ symmetric, cocycles of
type (ii) necessarily have $\varphi$ symmetric and $\alpha$
skew-symmetric, and cocycles of type (iv) vanish. This leads to a
known formula for $H^2(L\otimes A,K)$, where the cocycles of type
(i) contribute to the term $H^2(L,K) \otimes A^*$, the cocycles of
type (ii) contribute to the term $\mathcal B(L) \otimes HC^1(A)$, and the
cocycles of type (iii) are non-essential (in terminology of
\cite{asterisque}).

Another, more concrete, application is given in \S \ref{period}.

\section{Symmetric invariant bilinear forms}

\begin{theorem}\label{forms}
Let $L$ be a Lie algebra, $A$ an associative commutative algebra,
and at least one of $L$ and $A$ be finite-dimensional. Then each
symmetric invariant bilinear form on $L\otimes A$ can be represented
as a sum of decomposable forms $\varphi\otimes \alpha$, $\varphi:
L\times L \to K$, $\alpha: A\times A\to K$ of one of the following $6$
types:
\begin{enumerate}
\item both $\varphi$ and $\alpha$ are cyclic,
\item $\varphi ([L,L],L) = 0$,
\item $\alpha (AA,A) = 0$,
\end{enumerate}
where each of these $3$ types splits into two subtypes: with both
$\varphi$ and $\alpha$ symmetric, and with both $\varphi$ and
$\alpha$ skew-symmetric.
\end{theorem}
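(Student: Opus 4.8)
The plan is to run the argument of Theorem~\ref{h2} almost verbatim, with the invariance condition taking over the role of the cocycle condition. Using finite-dimensionality of $L$ or $A$ exactly as there, I would write a symmetric invariant form as a finite sum $\Phi=\sum_{i\in I}\varphi_i\otimes\alpha_i$ with $\varphi_i\colon L\times L\to K$ and $\alpha_i\colon A\times A\to K$. Since $\Phi$ is symmetric, its invariance is equivalent to cyclicity, which for arbitrary $x\otimes a,\ y\otimes b,\ z\otimes c$ reads
\[
\sum_{i\in I}\varphi_i([x,y],z)\,\alpha_i(ab,c)-\varphi_i([z,x],y)\,\alpha_i(ca,b)=0 .
\]
The structural point, which already predicts three types instead of four, is that this identity has only two terms and that summing it over the three cyclic permutations of $(x,y,z;a,b,c)$ gives $0=0$; so, unlike in Theorem~\ref{h2}, there is no independent ``Jacobi'' consequence to exploit.

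First I would extract the information about $\varphi$ by interchanging $x$ and $y$ in the Lie slots only (keeping $a,b,c$) and adding the result to the identity above. Using nothing beyond skew-symmetry of the bracket and commutativity of $A$, the $\varphi_i([x,y],z)\alpha_i(ab,c)$ terms cancel and one is left with
\[
\sum_{i\in I}\bigl(\varphi_i([y,z],x)-\varphi_i([z,x],y)\bigr)\,\alpha_i(ca,b)=0 .
\]
Reading the second factor as the trilinear form $\alpha_i$ precomposed with multiplication, Lemma~1.1 from \cite{low} partitions $I$ into a part on which $\varphi_i$ is cyclic and a part on which $\alpha_i(ca,b)=0$ for all $a,b,c$, i.e.\ $\alpha_i(AA,A)=0$. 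Symmetrically, interchanging $a$ and $b$ in the algebra slots only and subtracting gives
\[
\sum_{i\in I}\varphi_i([z,x],y)\,\bigl(\alpha_i(bc,a)-\alpha_i(ca,b)\bigr)=0 ,
\]
and Lemma~1.1 splits $I$ into a part on which $\alpha_i$ is cyclic and a part on which $\varphi_i([L,L],L)=0$. Intersecting the two partitions yields exactly the three types of the statement: both forms cyclic, $\varphi([L,L],L)=0$, and $\alpha(AA,A)=0$; the overlaps (where, say, $\varphi_i$ is both cyclic and annihilates $[L,L]$) are harmlessly absorbed into one of the types.

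To obtain the two subtypes I would, as in the last step of the proof of Theorem~\ref{h2}, write out the symmetry of $\Phi$, namely $\sum_i\varphi_i(x,y)\alpha_i(a,b)-\varphi_i(y,x)\alpha_i(b,a)=0$, symmetrize it in the Lie slots and in the algebra slots, and apply Lemma~1.1 once more. Because $\Phi$ is symmetric rather than skew, the combinations that survive are ``both $\varphi$ and $\alpha$ symmetric'' and ``both skew-symmetric'', the mixed ones vanishing since a form that is simultaneously symmetric and skew is zero (here $\mathrm{char}\,K\neq2$); this is the symmetric counterpart of Remark~\ref{indep}, reflecting $S^2(L\otimes A,K)\simeq\bigl(S^2(L,K)\otimes S^2(A,K)\bigr)\oplus\bigl(C^2(L,K)\otimes C^2(A,K)\bigr)$. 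A direct check that each of the six decomposable forms is indeed symmetric and invariant closes the argument.

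The step I expect to require the most care is making Lemma~1.1 apply to the two half-symmetrized identities: one must treat $\alpha_i(ca,b)$ and $\varphi_i([z,x],y)$ as trilinear forms and confirm that their vanishing is precisely $\alpha_i(AA,A)=0$ and $\varphi_i([L,L],L)=0$, and then verify that the two binary partitions really intersect to three types, not four. It is also worth noticing that, since no three-term Jacobi relation is ever produced, the hypothesis $\mathrm{char}\,K\neq3$ plays no role here: only $\mathrm{char}\,K\neq2$, used in the subtype splitting, is actually needed.
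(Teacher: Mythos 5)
Your proposal is correct and takes essentially the same approach as the paper: the same finite decomposition $\Phi=\sum_{i\in I}\varphi_i\otimes\alpha_i$, the same symmetrization in the Lie slots yielding the dichotomy ``$\varphi_i$ cyclic or $\alpha_i(AA,A)=0$'' via Lemma 1.1 of \cite{low}, and the same final splitting into the two subtypes from the symmetry of $\Phi$ (where indeed only $\mathrm{char}\,K\neq 2$ is used). The only cosmetic difference is how the second dichotomy is obtained: the paper proceeds sequentially, using cyclicity of the $\varphi_i$ on $I_1$ to rewrite the invariance identity as $\sum_{i\in I_1}\varphi_i([x,y],z)\otimes\bigl(\alpha_i(ab,c)-\alpha_i(ca,b)\bigr)=0$ and then partitioning $I_1$, whereas you symmetrize the original identity once more in the $A$-slots and intersect the two partitions of $I$ --- which is legitimate precisely because Lemma 1.1 applies to the two identities simultaneously, exactly as in the paper's proof of Theorem \ref{h2}.
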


\begin{proof}
The proof is absolutely similar to that of Theorem \ref{h2}. As in
the proof of Theorem \ref{h2}, we may write a symmetric invariant
bilinear form $\Phi$ on $L\otimes A$ as $\mathop{\sum}\limits_{i\in
I} \varphi_i \otimes \alpha_i$ for suitable bilinear maps
$\varphi_i:L\times L \to K$ and $\alpha_i:A\times A \to K$. The
invariance condition, written for a given triple $x\otimes a$,
$y\otimes b$, $z\otimes c$, reads:
\begin{equation}\label{symm-bilin}
\mathop{\sum}\limits_{i\in I} \varphi_i([x,y],z) \otimes
\alpha_i(ab,c) +
              \varphi_i([x,z],y) \otimes \alpha_i(ca,b) = 0.
\end{equation}
Symmetrizing this with respect to $x,y$, we get:
\begin{equation*}
\mathop{\sum}\limits_{i\in I} \Big( \varphi_i([x,z],y) +
\varphi_i([y,z],x) \Big) \otimes \alpha_i(ca,b) = 0.
\end{equation*}
Hence the index set can be partitioned $I = I_1 \cup I_2$ in such a
way that
\begin{equation*}
\varphi_i([x,z],y) + \varphi_i([y,z],x) = 0
\end{equation*}
for any $i\in I_1$, and $\alpha_i(AA,A) = 0$ for any $i\in I_2$.
Then (\ref{symm-bilin}) can be rewritten as
\begin{equation*}
\mathop{\sum}\limits_{i\in I_1} \varphi_i([x,y],z) \otimes \Big
(\alpha_i(ab,c) - \alpha_i(ca,b) \Big) = 0.
\end{equation*}
Hence there is a partition $I_1 = I_{11} \cup I_{12}$ such that
$\varphi_i([L,L],L) = 0$ for any $i\in I_{11}$, and $\alpha_i(ab,c)
= \alpha_i(ac,b)$ for any $i\in I_{12}$.

The condition of symmetry of $\Phi$:
\begin{equation*}
\mathop{\sum}\limits_{i\in I} \varphi_i(x,y) \otimes \alpha_i(a,b) -
\varphi_i(y,x) \otimes \alpha_i(b,a) = 0,
\end{equation*}
being symmetrized with respect to $x,y$, allows us to partition
further each of the sets $I_{11},I_{12},I_2$ into two subsets, one
having both $\varphi_i$ and $\alpha_i$ symmetric, and the other one
having both $\varphi_i$ and $\alpha_i$ skew-symmetric.
\end{proof}

This generalizes \cite[Theorem 4.1]{asterisque}, where a similar
statement is proved for $A$ unital.

\section{Derivations}\label{section-der}

Naturally, one may try to apply the same approach to description of
the derivations of a given current algebra $L\otimes A$ (for unital
$A$, see \cite[Corollary 2.2]{low}). Indeed, each derivation $D$ of
$L\otimes A$, being an element of
\begin{equation*}
End (L\otimes A, L \otimes A) \simeq End(L,L) \otimes End(A,A) ,
\end{equation*}
can be expressed in the form $D = \mathop{\sum}\limits_{i\in I}
\varphi_i \otimes \alpha_i$, where $\varphi_i: L \to L$ and
$\alpha_i:A \to A$ are linear maps. The condition that $D$ is a
derivation, written for an arbitrary pair $x\otimes a$ and $y\otimes
b$, where $x,y\in L$ and $a,b\in A$, reads:
\begin{equation*}
\mathop{\sum}\limits_{i\in I} \varphi_i([x,y]) \otimes \alpha_i(ab)
-
              [\varphi_i(x),y] \otimes \alpha_i(a)b -
              [x,\varphi_i(y)] \otimes a\alpha_i(b) = 0.
\end{equation*}
Symmetrizing this equality with respect to $a$, $b$ (this is
equivalent to symmetrization with respect to $x$, $y$):
\begin{equation*}
\mathop{\sum}\limits_{i\in I} \Big( [\varphi_i(x),y] -
[x,\varphi_i(y)] \Big) \otimes
              \Big( a\alpha_i(b) - b\alpha_i(a) \Big) = 0,
\end{equation*}
we get a partition of the index set $I$ into two subsets with
conditions $[\varphi_i(x),y] = [x,\varphi_i(y)]$ and $a\alpha_i(b) =
b\alpha_i(a)$, respectively. But as there are only two variables in
each of $L$ and $A$, no other symmetrization is possible, so the
last equality is all what we can get in this way.

The failure of this method can be also explained by looking at the
simple example of the Lie algebra $sl(2) \otimes tK[t]$. In $sl(2)$,
fix a basis $\{ e_-, h, e_+ \}$ with multiplication
\begin{equation}\label{sl2-basis}
[h,e_-] = - e_-, \quad [h,e_+] = e_+, \quad [e_-,e_+] = h.
\end{equation}
It is easy to check that the map defined, for any $f(t)\in tK[t]$,
by the formulae
\begin{align*}
e_- \otimes f(t) &\mapsto e_- \otimes t\Big(\frac{df(t)}{dt} - f(t)\Big) \\
e_+ \otimes f(t) &\mapsto e_+ \otimes t\Big(\frac{df(t)}{dt} + f(t)\Big) \\
h   \otimes f(t) &\mapsto h \>\>\>\otimes t\frac{df(t)}{dt}
\end{align*}
is a derivation of $sl(2)\otimes tK[t]$. It is
obvious that this map is not %can not be represented as a
a decomposable one, i.e., of the form $\varphi \otimes \alpha$ for
some $\varphi: sl(2) \to sl(2)$ and $\alpha: tK[t]\to tK[t]$. But
for this approach to succeed, all the maps in question should be
representable in such a way in the end.

However, under additional assumption on $L\otimes A$, we can derive
information about $Der(L\otimes A)$ from the results of the
preceding sections, using the relationship between $H^1(L,L^*)$,
$H^2(L,K)$, and $\mathcal B(L)$. In the literature, this relationship was
noted many times in a slightly different form, and goes back to the
classical works of Koszul and Hochschild--Serre \cite{hs}. Namely,
there is an exact sequence
\begin{equation}\label{seq}
0 \to H^2(L,K) \overset{u}\to H^1(L,L^*) \overset{v}\to \mathcal B(L)
\overset{w}\to H^3(L,K)
\end{equation}
where for the representative $\varphi\in Z^2(L,K)$ of a given
cohomology class, we have to take the class of $u(\varphi)$, the
latter being given by
\begin{equation*}
(u(\varphi)(x))(y) = \varphi(x,y)
\end{equation*}
for any $x,y\in L$, $v$ is sending the class of a given cocycle
$D\in Z^1(L,L^*)$ to the bilinear form $v(D): L\times L\to K$
defined by the formula
\begin{equation*}
v(D)(x,y) = D(x)(y) + D(y)(x),
\end{equation*}
and $w$ is sending a given symmetric bilinear invariant form
$\psi: L\times L \to K$ to the class of the cocycle $\omega\in
Z^3(L,K)$ defined by
\begin{equation*}
\omega(x,y,z) = \psi([x,y],z)
\end{equation*}
(see, for example, \cite{dzhu}, where a certain long exact sequence
is obtained, of which this one is the beginning, and references
therein for many earlier particular versions; this exact sequence
was also established in \cite[Proposition 7.2]{neeb} with two
additional terms on the right).

In the case where $L \simeq L^*$ as $L$-modules, the sequence (\ref{seq})
provides a way to evaluate $H^1(L,L)$ given $H^2(L,K)$ and $\mathcal B(L)$.
The $L$-module isomorphism $L \simeq L^*$ implies the existence of a
symmetric invariant non-degenerate form $\langle \cdot,
\cdot \rangle$ on $L$. In terms of this form, $u$ is sending the
class of a given cocycle $\varphi\in Z^2(L,K)$ to the class of the
cocycle $u(\varphi)\in Z^1(L,L)$ defined by
\begin{equation*}
\langle (u(\varphi))(x),y \rangle = \varphi(x,y) ,
\end{equation*}
and $v$ is sending the class of a given cocycle $D\in Z^1(L,L)$ to
the bilinear form $v(D): L\times L\to K$ defined by the formula
\begin{equation*}
v(D)(x,y) = \langle D(x),y \rangle + \langle x,D(y) \rangle.
\end{equation*}

Turning to current Lie algebras, we will make even stronger
assumption: that  $L \simeq L^*$ and $A \simeq A^*$. Then, utilizing
the results of preceding sections about $H^2(L\otimes A,K)$ and
$\mathcal B(L\otimes A)$, we will derive results about $Der(L\otimes A)$.

In the literature, given $H^1(L,L^*)$, the space $H^2(L,K)$ was
computed for various Lie algebras $L$ (see, for example,
\cite{santharoubane}, \cite{dzhu} and references therein). Here we
utilize this connection in the other direction.

\begin{theorem}\label{der}
Let $L$ be a nonabelian Lie algebra, $A$  an associative commutative
algebra, both $L$ and $A$ finite-dimensional and with symmetric
invariant non-degenerate bilinear form. Then each derivation of
$L\otimes A$ can be represented as the sum of decomposable linear
maps $d\otimes \beta$, where $d: L \to L$ and $\beta: A \to A$ are
of one of the following types:
\begin{enumerate}
\item
$d([x,y]) = \lambda ([d(x),y] + [x,d(y)])$, $\beta(ab) = \mu
\beta(a)b$ for certain $\lambda, \mu \in K$ such that $\lambda\mu =
1$,
\item
$d([x,y]) = \lambda [d(x),y]$, $\beta(ab) = \mu (\beta(a)b +
a\beta(b))$ for certain $\lambda, \mu \in K$ such that $\lambda\mu =
1$,
\item $[d(x),y] + [x,d(y)] = 0$, $\beta(AA) = 0$, $\beta(a)b = a\beta(b)$,
\item $d([L,L]) = 0$, $[d(x),y] + [x,d(y)] = 0$, $\beta(a)b = a\beta(b)$,
\item $d([L,L]) = 0$, $[d(x),x] = 0$, $\beta(a)b + a\beta(b) = 0$,
\item $[d(x),x] = 0$, $\beta(AA) = 0$, $\beta(a)b + a\beta(b) = 0$,
\item $d([L,L]) = 0$, $d(L) \subseteq \mathcal Z(L)$,
\item $d([L,L]) = 0$, $\beta(A) \subseteq Ann(A)$,
\item $d(L) \subseteq \mathcal Z(L)$, $\beta(AA) = 0$,
\item $\beta(AA) = 0$, $\beta(A) \subseteq Ann(A)$.
\end{enumerate}
\end{theorem}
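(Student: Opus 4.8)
The plan is to reduce the computation of $Der(L\otimes A)$ to the invariants found in \S\S1--2 by exploiting the module isomorphism $L\otimes A\simeq(L\otimes A)^*$. First I would build a symmetric invariant non-degenerate form on $\mathfrak{g}:=L\otimes A$ as the tensor product $\langle x\otimes a,y\otimes b\rangle=\langle x,y\rangle_L\langle a,b\rangle_A$ of the given forms; invariance on $\mathfrak{g}$ follows from invariance of $\langle\cdot,\cdot\rangle_L$ together with $\langle ab,c\rangle_A=\langle a,bc\rangle_A$, and non-degeneracy is automatic for a tensor product of non-degenerate forms. This gives $\mathfrak{g}\simeq\mathfrak{g}^*$ as $\mathfrak{g}$-modules, hence $Der(\mathfrak{g})=Z^1(\mathfrak{g},\mathfrak{g})\simeq Z^1(\mathfrak{g},\mathfrak{g}^*)$. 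Under $C^1(\mathfrak{g},\mathfrak{g}^*)=\mathfrak{g}^*\otimes\mathfrak{g}^*$ a cocycle becomes a bilinear form $\Psi$ on $\mathfrak{g}$, and a decomposable derivation $d\otimes\beta$ corresponds to the decomposable form $\varphi\otimes\alpha$ with $\varphi(x,y)=\langle d(x),y\rangle_L$ and $\alpha(a,b)=\langle\beta(a),b\rangle_A$. The decisive gain is that the $1$-cocycle condition for $\Psi$ is trilinear in the arguments of $\mathfrak{g}$, so it exposes the very ``third variable'' whose absence (seen in the $sl(2)\otimes tK[t]$ example) made the naive symmetrization of the derivation equation inconclusive.

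Next I would split $\Psi=\Psi^s+\Psi^a$ into its symmetric and skew parts and feed this into the exact sequence (\ref{seq}) for $\mathfrak{g}$. The symmetrization $v(\Psi)$, with $v(\Psi)(x,y)=\Psi(x,y)+\Psi(y,x)$, lands in $B(\mathfrak{g})$, so the cocycle condition forces $\Psi^s$ to be a symmetric invariant form; by Theorem \ref{forms} it is a sum of decomposable forms of the $6$ listed types. The skew part $\Psi^a$ is a $2$-cochain whose Lie coboundary equals $w(\Psi^s)$, the connecting map of (\ref{seq}); whenever $w(\Psi^s)=0$---in particular on the pieces of $\Psi^s$ annihilated by $\varphi([L,L],L)=0$ or $\alpha(AA,A)=0$---the skew part is an honest element of $Z^2(\mathfrak{g},K)$, to which Theorem \ref{h2} applies, supplying decomposable $2$-cocycles of the $8$ listed types. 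In this way the skew and symmetric ingredients of every derivation are produced by Theorems \ref{h2} and \ref{forms}.

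It then remains to translate the form-side conditions back to the map side through $\langle\cdot,\cdot\rangle_L$ and $\langle\cdot,\cdot\rangle_A$ and to collect the results into the ten types. Non-degeneracy turns $\varphi([L,L],L)=0$ into $d([L,L])=0$ and $\varphi(L,[L,L])=0$ into $d(L)\subseteq Z(L)$ (and likewise $\alpha(AA,A)=0\mapsto\beta(AA)=0$, $\alpha(A,AA)=0\mapsto\beta(A)\subseteq Z(A)$), while cyclicity of $\varphi$ becomes the scalar-derivation relation $d([x,y])=\lambda([d(x),y]+[x,d(y)])$ (respectively $d([x,y])=\lambda[d(x),y]$ once the self- and skew-adjoint parts are matched), and similarly for $\beta$. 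A single decomposable map $d\otimes\beta$ has $\varphi=\varphi^s+\varphi^a$ and $\alpha=\alpha^s+\alpha^a$, so the symmetric and skew parts of $\Psi$ each break into two of the subtypes of Theorems \ref{h2} and \ref{forms}; recombining these is what fuses the separate symmetric/skew subtypes into the unsubdivided types (i)--(x). Substituting a candidate $\varphi\otimes\alpha$ into the derivation equation and comparing the single term on the left with the two-term combination on the right forces the scalar constraint $\lambda\mu=1$ in types (i) and (ii).

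The main obstacle is precisely this recombination bookkeeping together with the coupling through $w$: I must show that the sym$\otimes$skew and skew$\otimes$sym pieces pair up consistently, so that each surviving combination is a genuine decomposable derivation of one of the ten stated forms, that no admissible combination is omitted, and that the type-(i)/(ii) pieces carrying $w(\Psi^s)\neq0$ are exactly cancelled by a matching skew $2$-cochain to reconstitute a single scalar-derivation. The hypothesis that $L$ is nonabelian enters here to keep $[L,L]\neq0$, so that the dichotomies coming from Lemma 1.1 of \cite{low} are non-vacuous and the scalars $\lambda,\mu$ are determined rather than free; without it the list would collapse. The remaining checks---that each listed pair $(d,\beta)$ indeed satisfies the derivation equation---are routine substitutions of the kind already verified for type (i).
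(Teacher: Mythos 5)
Your setup is sound and is essentially the paper's: the tensor-product form, the correspondence $D \leftrightarrow \Psi$ with $\Psi(X,Y)=\langle D(X),Y\rangle$, the observation that the derivation identity for $D$ is equivalent to ($\Psi^s$ invariant) together with ($d\Psi^a$ equal, up to sign, to the cochain $\omega_{\Psi^s}(X,Y,Z)=\Psi^s([X,Y],Z)$), and the plan to feed $\Psi^s$ into Theorem \ref{forms} and $\Psi^a$ into Theorem \ref{h2}. But the step you yourself flag as ``the main obstacle'' is not bookkeeping --- it is the crux of the theorem, and your proposed route through it (pairing each cyclic$\otimes$cyclic piece of $\Psi^s$ with a matching skew $2$-cochain that cancels its $w$-image, so as to ``reconstitute a scalar-derivation'') is left entirely without argument. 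The paper removes this case before it arises, using exactness of (\ref{seq}) at $B(L\otimes A)$: since $\omega_{\Psi^s}=\mp\, d\Psi^a$ is a coboundary, $\Psi^s$ lies in $Ker\,w$, and $Ker\,w$ is spanned by decomposable forms of the degenerate types (ii) and (iii) of Theorem \ref{forms} only ($\varphi([L,L],L)=0$ or $\alpha(AA,A)=0$). Hence $\Psi^s$ can be re-decomposed with no cyclic$\otimes$cyclic pieces at all; for the degenerate pieces $\omega$ vanishes as a cochain, the associated maps $d_i\otimes\beta_i$ are genuine derivations (because $d([L,L])=0$ and $d(L)\subseteq Z(L)$, resp.\ the $\beta$-analogues, kill all three terms of the derivation identity, and equality (\ref{angles}) identifies their $v$-images with $2\varphi_i\otimes\alpha_i$), and after subtracting them the remaining form is purely skew, hence an honest $2$-cocycle to which Theorem \ref{h2} and the map $u$ apply. (The paper runs this on cohomology classes, so it also notes that inner derivations are sums of decomposables, $ad\,(x\otimes a)=ad\,x\otimes R_a$.) Without the identification $Im\,v=Ker\,w=\text{span of degenerate types}$, your argument reduces the theorem to an unproved claim of the same order of difficulty.

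Second, your translation dictionary for the final step is wrong, and the error is not cosmetic. Cyclicity of a symmetric $\varphi$ translates under $\langle\cdot,\cdot\rangle_L$ into the centroid condition $d([x,y])=[d(x),y]$, and the type (i) condition of Theorem \ref{h2} on a skew $\varphi$ translates into $d$ being an honest derivation; so translating the conditions of Theorems \ref{h2} and \ref{forms} can only ever produce the $\lambda=\mu=1$ instances and the degenerate types, never the general $\lambda\mu=1$ families of types (i)--(ii). In the paper those families come from a separate, final step that your proposal essentially skips: once every derivation is known to be a sum of decomposable derivations, one characterizes \emph{all} decomposable derivations by writing the identity (\ref{*}) for $d\otimes\beta$, symmetrizing to obtain the dichotomy $[d(x),y]=[x,d(y)]$ or $\beta(a)b=a\beta(b)$, and then performing a case analysis on vanishing and linear dependence of the operator-valued factors; it is this case analysis (where the hypothesis that $L$ is nonabelian actually does its work) that produces the ten types and the constraint $\lambda\mu=1$. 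Your closing remark that substituting $\varphi\otimes\alpha$ into the derivation equation ``forces $\lambda\mu=1$'' gestures at this, but as written your scheme derives the list from the wrong source.
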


\begin{proof}
By abuse of notation, let $\langle\cdot\,,\cdot\rangle$ denote a
symmetric invariant non-degenerate bilinear form both on $L$ and
$A$. Obviously, the tensor product of these forms defines a
symmetric invariant non-degenerate bilinear form on $L\otimes A$,
for which by even bigger abuse of notation we will use the same
symbol:
\begin{equation*}
\langle x\otimes a, y\otimes b \rangle = \langle x,y \rangle \langle
a,b \rangle.
\end{equation*}
We have $L^* \simeq L$ as $L$-modules, $A^* \simeq A$ as
$A$-modules, and $(L \otimes A)^* \simeq L \otimes A$ as $L\otimes
A$-modules.

As a vector space, $H^1(L\otimes A, L\otimes A)$ can be represented
as the direct sum of $Ker\,v$ and $Im\,v$, and the exact sequence
(\ref{seq}) tells that $Ker\,v = Im\,u$ and $Im\,v = Ker\,w$.

By Theorem \ref{h2}, $H^2(L\otimes A, K)$ is spanned by cohomology
classes which can be represented by decomposable cocycles $\varphi
\otimes \alpha$ for appropriate $\varphi: L \times L \to L$ and
$\alpha: A \times A \to K$. For each such pair $\varphi$ and
$\alpha$, there are unique linear maps $d: L \to L$ and $\beta: A
\to A$ such that
\begin{equation}\label{d}
\langle d(x),y \rangle = \varphi(x,y)
\end{equation}
for any $x,y\in L$, and
\begin{equation}\label{beta}
\langle \beta(a),b \rangle = \alpha(a,b)
\end{equation}
for any $a,b \in A$. Hence the decomposable linear map $d \otimes
\beta: L\otimes A \to L\otimes A$ satisfies
\begin{equation*}
\langle (d \otimes \beta) (x\otimes a), y\otimes b \rangle =
(\varphi \otimes \alpha) (x\otimes a, y\otimes b) ,
\end{equation*}
i.e., coincides with $u(\varphi \otimes \alpha)$. Thus, $Im\,u$ is
spanned by cohomology classes whose representatives are decomposable
derivations.

Similarly, by the proof of Theorem \ref{forms}, $\mathcal B(L\otimes A)$ is spanned by
decomposable elements $\varphi \otimes \alpha$, and $Ker\,w$ is
spanned by such elements of types (ii) and (iii), i.e., either
$\varphi([L,L],L) = 0$ or $\alpha(AA,A) = 0$. Again, for each such
element we can find $d: L \to L$ and $\beta: A \to A$ satisfying
(\ref{d}) and (\ref{beta}) respectively. Furthermore, we may assume
that for each such $\varphi \otimes \alpha$, the maps $\varphi$ and
$\alpha$ are either both symmetric, or both skew-symmetric, and
hence both $d$ and $\beta$ are either self-adjoint or
skew-self-adjoint, respectively, with respect to $\langle
\cdot,\cdot \rangle$. In both cases we have:
\begin{multline}\label{angles}
\langle (d\otimes \beta)(x\otimes a), y\otimes b \rangle + \langle
x\otimes a, (d\otimes \beta)(y\otimes b) \rangle \\ = \langle d(x),y
\rangle \langle \beta(a),b \rangle + \langle x,d(y) \rangle \langle
a,\beta(b) \rangle = 2 \langle d(x),y \rangle \langle \beta(a),b
\rangle \\ = 2 \varphi(x,y) \otimes \alpha(a,b)
\end{multline}
for any $x,y\in L$ and $a,b\in A$.

The condition $\varphi([L,L],L) = 0$ ensures that $d([L,L]) = 0$,
and an equivalent condition $\varphi(L,[L,L]) = 0$ ensures that
\begin{equation*}
\langle [d(x),z],y \rangle = - \langle d(x), [y,z] \rangle = 0 ,
\end{equation*}
implying $d(L) \subseteq \mathcal Z(L)$, and hence $d\otimes \beta$ is a
derivation of $L\otimes A$. Quite analogously, the condition
$\alpha(AA,A) = \alpha(A,AA) = 0$ implies also that $d\otimes \beta$
is a derivation of $L\otimes A$. The equality (\ref{angles}) ensures
that $v$ maps the cohomology class of this derivation to
$2\varphi\otimes \alpha$. Thus $Im\,v$ is spanned by images of
cohomology classes whose representatives are decomposable
derivations.

Putting all this together, we see that $H^1(L\otimes A, L\otimes A)$
is spanned by the cohomology classes whose representatives are
decomposable derivations. As inner derivations of $L\otimes A$ are,
obviously, also spanned by decomposable inner derivations
$ad\,(x\otimes a) = ad\,x \otimes R_a$, where $R_a$ is the
multiplication on $a\in A$, any derivation of $L\otimes A$ is
representable as the sum of decomposable derivations.

The rest is easy. The condition that $d\otimes \beta$ is a
derivation, reads:
\begin{equation}\label{*}
d([x,y]) \otimes \beta(ab) - [d(x),y] \otimes \beta(a)b - [x,d(y)]
\otimes a\beta(b) = 0
\end{equation}
for any $x,y \in L$, $a,b \in A$. Symmetrizing (\ref{*}) as in the
beginning of this section, we see that either $[d(x),y] = [x,d(y)]$
for any $x,y \in L$, or $\beta(a)b = a\beta(b)$ for any $a,b \in A$.
The equation (\ref{*}) is equivalent to
\begin{equation*}
d([x,y]) \otimes \beta(ab) - [d(x),y] \otimes (\beta(a)b +
a\beta(b)) = 0
\end{equation*}
in the first case, and to
\begin{equation*}
d([x,y]) \otimes \beta(ab) - ([d(x),y] + [x,d(y)]) \otimes \beta(a)b
= 0.
\end{equation*}
in the second case. Now trivial case-by-case considerations
involving vanishing and linear dependence of the linear operators
occurring as tensor product factors in these two equalities, produce
the final list of derivations.
\end{proof}

Theorem \ref{der} can be applied, for example, to the Lie algebra
$\mathfrak g \otimes tK[t]/(t^n)$, $n > 2$, where $\mathfrak g$ is a
semisimple finite-dimensional Lie algebra over any field of
characteristic $0$, to obtain a very short proof of the result of
Benoist \cite{benoist} about realization of any semisimple Lie
algebra as semisimple part of the Lie algebra of derivations of a
nilpotent Lie algebra (another short proof with direct calculation
of $Der(\mathfrak g \otimes tK[t]/(t^3))$ follows from
\cite[Proposition 3.5]{leger-luks}). Indeed, as noted, for example,
in \cite[Lemma 2.2]{bb}, $tK[t]/(t^n)$ possesses a symmetric
nondegenerate invariant bilinear form $B$, hence $\mathfrak g
\otimes tK[t]/(t^n)$ possesses such a form (being the product of the
Killing form on $\mathfrak g$ and $B$), so Theorem \ref{der} is
applicable. As $\mathfrak g$ is perfect and centerless, the derivations of types 
(iv), (v), (vii), (viii), (ix) vanish.
The remaining types can be handled, for example, by appealing to results of 
\cite{hopkins}, \cite{filippov} or \cite{leger-luks}, which imply that 
in the case $\mathfrak g \not\simeq sl(2)$, the corresponding mappings $d$ 
vanish also for types (ii) and (iii), and for the rest of the types
are either inner derivations of $\mathfrak g$, or multiplications by scalar. 
%
%
%Mappings $d$ appearing in part (i) constitute the so-called $\lambda$-derivations,
%which, according to results of \cite{hopkins}, \cite{filippov} and \cite{leger-luks},
%vanish unless either $\lambda = \frac{1}{2}$, or $\lambda = 1$, or 
%$\mathfrak g \simeq sl(2)$.
%Further, in the case $\lambda = \frac{1}{2}$, $d$ is a multiplication by scalar, 
%and in the case $\lambda = 1$, $d$ is an inner derivation of $\mathfrak g$.
%
%Note that condition on mappings $d$ in part (ii) is a particular case of those condition
%in part (iii), and the latter implies vanishing of $d$ unless $\mathfrak g \simeq sl(2)$,
%according to the result from \cite{leger-luks}.
%
%According to results of \cite{benoist} or \cite{leger-luks}, mappings $d$
%appearing in part (vi) are multiplications by scalar.
%
%first summand: D(x\otimes t) = x \otimes t^{n-2}
%               D(x\otimes t^2) = x\otimes 2t^{n-1}
%               D(x\otimes t^k) = 0, k \ge 3
%comes from parts (i), (vi)
%
%second summand: D(x\otimes t^k) = [x,y] \otimes t^{k+l}
%l = 0, 1, 2, ..., n-2
%y \in \mathfrak g
%comes from part (i)      
%
%third summand: 
%D(x\otimes t) = E(g) \otimes t^{n-1}
%D(x\otimes t^k) = 0 if k\ge 2
%E \in End(g)
%comes from parts (i), (vi), (x)
%
%
Then, performing elementary calculations with conditions imposed on $\beta$'s in the 
remaining types, and rearranging the obtained spaces of derivations, we get the following 
isomorphism of vector spaces:
\begin{equation}\label{summands}
Der (\mathfrak g \otimes tK[t]/(t^n)) \simeq
(\mathfrak g \otimes K[t]/(t^n)) \oplus (End(\mathfrak g)/\mathfrak g) \oplus
K .
\end{equation}
Elements of the first summand are assembled from types (i) and (x), and act on
$\mathfrak g \otimes tK[t]/(t^n)$ as the Lie multiplication 
by an element of $\mathfrak g$ and the associative commutative multiplication by an
element of $K[t]/(t^n)$.
Elements of the second summand are assembled from types (i), (vi) and (x), and act by the rule
\begin{align*}
&x\otimes t\phantom{^k} \mapsto F(x) \otimes t^{n-1}  \\
&x\otimes t^k           \mapsto 0 \qquad \text{if } k\ge 2 ,
\end{align*}
where $x\in \mathfrak g$, $F \in End(\mathfrak g)$, and elements of $\mathfrak g$ are assumed to be embedded into 
$End(\mathfrak g)$ as inner derivations. 
Elements of the third, one-dimensional, summand are assembled from types (i) and (vi), 
and are proportional to the following derivation:
\begin{align*}
&x\otimes t\phantom{^2} \mapsto \phantom{2}x \otimes t^{n-2}   \\
&x\otimes t^2           \mapsto 2x\otimes t^{n-1}   \\
&x\otimes t^k           \mapsto 0 \qquad \text{if } k \ge 3 ,
\end{align*}
where $x\in \mathfrak g$.

All this implies that, as a Lie algebra, $Der(\mathfrak g \otimes tK[t]/(t^n))$ splits into the semidirect sum
of the semisimple part isomorphic to $\mathfrak g$ (identified with the part 
$\mathfrak g \otimes 1$ of the first summand in (\ref{summands})), and the nilpotent radical consisting of 
$\mathfrak g \otimes tK[t]/(t^n)$ from the first summand, and the whole second and third 
summands.

The case $\mathfrak g = sl(2)$ can be treated separately and easily.

\section{Periodization of semisimple Lie algebras}\label{period}

For a given Lie algebra $L$, its \textit{periodization} is an
$\mathbb N$-graded Lie algebra, with component in each degree
isomorphic to $L$. In other words, the periodization of $L$ is $L
\otimes tK[t]$.

In \cite{larsson}, Anna Larsson studied periodization of semisimple
finite-dimensional Lie algebras $\mathfrak g$ over any field $K$ of
characteristic $0$. She proved that, unless $\mathfrak g$ contains
direct summands isomorphic to $sl(2)$, its periodization possesses a
presentation with only quadratic relations. Since generators and
relations of (generalized graded) nilpotent Lie algebras can be
interpreted as the first and second homology, Larsson's statement
can be formulated in homological terms. 

Note that in \cite{larsson-thesis} this result
was generalized to some classes of Lie superalgebras.
We do not touch upon superalgebras here, and it seems to be an interesting task
to tackle the results and questions from \cite{larsson-thesis} from
this paper's viewpoint.
Interest in periodizations of Lie superalgebras, and whether they admit generators
subject to quadratic relations, arose from an earlier work of
L\"ofwall and Roos \cite{lofwall-roos} about some amazing Hopf algebras (for further
details, see \cite{larsson} and \cite{larsson-thesis}).

As noted in \cite{larsson}, the whole space $H^*(\mathfrak g \otimes
t \mathbb C[t], \mathbb C)$ was studied by much more sophisticated
methods in the celebrated paper by Garland and Lepowsky
\cite{garland-lepowsky} (actually, a particular case interesting for
us here was already sketched in \cite{garland}; the case of
$\mathfrak g = sl(2)$ was also treated in \cite[p.
233]{feigin-fuchs}). Garland and Lepowsky determined the eigenvectors of the
Laplacian on the corresponding Chevalley-Eilenberg (co)chain
complex. However, to extract from \cite{garland-lepowsky} exact
results about (co)homology of interest requires nontrivial
combinatorics with the Weyl group, as demonstrated in
\cite{hanlon-wales}, and case-by-case analysis for each series of
the simple Lie algebras. Here we derive results for the second
cohomology in a uniform way from the results of \S 1 using
elementary methods providing an alternative short proof of
Larsson's result. Also, our approach clearly shows why the case of
$sl(2)$ is exceptional.

\begin{theorem}[Larsson]\label{periodis}
Let $\mathfrak g$ be a finite-dimensional semisimple Lie algebra
over an algebraically closed field $K$ of characteristic $0$. Then
\begin{equation*}
H^2(\mathfrak g \otimes tK[t],K) \simeq C^2(\mathfrak
g,K)/B^2(\mathfrak g,K) \>\bigoplus\> \left(\bigoplus S_\alpha
\right),
\end{equation*}
where the second direct sum is taken over all simple direct summands
of $\mathfrak g$ isomorphic to $sl(2)$, and each $S_\alpha$ is a
certain $5$-dimensional space of symmetric bilinear forms on the
corresponding direct summand. The basic cocycles can be chosen among 
cocycles of the form
\begin{equation}\label{cocycles}
\Phi (x \otimes t^i, y \otimes t^j) = \begin{cases}
\varphi(x,y)  &\text{if } i=j=1,  \\
0             &\text{otherwise},
\end{cases}
\end{equation}
where $x,y \in \mathfrak g$ and $\varphi$ is a skew-symmetric
bilinear form on $\mathfrak g$, and the cocycles $\Psi$ whose only non-vanishing
values on all pairs of the simple direct summands of $\mathfrak g$ are determined by the formula
\begin{equation}\label{cocycles-sl2}
\Psi (x \otimes t^i, y \otimes t^j) = \begin{cases}
\psi(x,y)   &\text{if } i=1, j=2,  \\
- \psi(x,y) &\text{if } i=2, j=1,  \\
0             &\text{otherwise},
\end{cases}
\end{equation}
where $x,y$ belong to the corresponding $sl(2)$-direct summand, and
$\psi$ is a symmetric bilinear form on this direct summand satisfying
\begin{equation}\label{sl2-form}
\psi(e_-,e_+) = \frac 12 \psi(h,h)
\end{equation}
in the standard $sl(2)$-basis {\rm (\ref{sl2-basis})}.
\end{theorem}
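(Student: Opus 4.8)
The plan is to apply Theorem~\ref{h2} to $L=\mathfrak g$, $A=tK[t]$ and then to exploit that a semisimple $\mathfrak g$ is perfect with $H^2(\mathfrak g,K)=0$, together with the very thin invariant theory of the non-unital algebra $tK[t]$. By Remark~\ref{indep} the cochain space $C^2(\mathfrak g\otimes A,K)$ splits into forms symmetric on $\mathfrak g$ and skew on $A$ and forms skew on $\mathfrak g$ and symmetric on $A$, with cocycles of the two kinds cohomologically independent; and the explicit form of $d\Omega$ recorded in Remark~\ref{indep} shows every coboundary is skew on $\mathfrak g$ and symmetric on $A$. So I would compute the two contributions to $H^2(\mathfrak g\otimes tK[t],K)$ separately. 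The elementary input is a list of the bilinear forms on $A=tK[t]$: since $AA=t^2K[t]$ one checks that the only skew-symmetric cyclic form (up to scalar) is the degree-three form $\alpha_0$ with $\alpha_0(t,t^2)=1=-\alpha_0(t^2,t)$; that there is no nonzero skew-symmetric form with $\alpha(ab,c)+\alpha(ca,b)+\alpha(bc,a)=0$; that a symmetric cyclic form is $\alpha(a,b)=\lambda(ab)$ for a functional $\lambda$; and that $\alpha(AA,A)=0$ forces a symmetric $\alpha$ to be supported on the single pair $(t,t)$.

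For the skew-on-$\mathfrak g$, symmetric-on-$A$ summand I would run through the four types of Theorem~\ref{h2}. Type~(iii) gives $\varphi(\mathfrak g,\mathfrak g)=0$, hence $\varphi=0$. In type~(i) the condition on $\varphi$ is the $2$-cocycle identity, so $\varphi\in Z^2(\mathfrak g,K)=B^2(\mathfrak g,K)$ is a coboundary $\ell([\cdot,\cdot])$, while $\alpha(a,b)=\lambda(ab)$; then $\varphi\otimes\alpha=d(\ell\otimes\lambda)$. Type~(ii) involves a skew-symmetric cyclic $\varphi$, but a semisimple $\mathfrak g$ carries no nonzero such form: writing $\varphi=\langle R\,\cdot,\cdot\rangle$ with $R$ skew-adjoint for an invariant form, cyclicity forces $[Ry,z]=[y,Rz]$ and in particular $[Ry,y]=0$ for all $y$, so $Ry$ lies in the centralizer of every $y$; one concludes $R$ is scalar and, being skew-adjoint, zero. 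There remains type~(iv), where $\alpha$ is supported on $(t,t)$ and $\varphi$ is an arbitrary skew form; these are exactly the cocycles $\Phi$ of~(\ref{cocycles}). Writing a general $1$-cochain as $\Omega(x\otimes t^n)=\ell_n(x)$ and using perfectness, one sees $\Phi=d\Omega$ if and only if $\varphi\in B^2(\mathfrak g,K)$, so this summand contributes precisely $C^2(\mathfrak g,K)/B^2(\mathfrak g,K)$.

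The symmetric-on-$\mathfrak g$, skew-on-$A$ summand has no coboundaries, so any nonzero cocycle is a nonzero class. Here type~(iii) again kills $\psi$, type~(iv) forces the skew $\alpha$ onto $(t,t)$ and hence $\alpha=0$, and type~(ii) is empty because there is no nonzero skew $3$-term form on $A$. Only type~(i) survives, with $\alpha$ a multiple of $\alpha_0$ and $\psi$ symmetric satisfying
\[
F_\psi(x,y,z):=\psi([x,y],z)+\psi([y,z],x)+\psi([z,x],y)=0;
\]
reading off $\psi\otimes\alpha_0$ from~(\ref{cocycle-eq}) reproduces the cocycles $\Psi$ of~(\ref{cocycles-sl2}). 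Since the simple summands of $\mathfrak g$ are mutually orthogonal ideals, testing $F_\psi=0$ on triples whose arguments lie in two different summands (where two of the three brackets vanish), together with perfectness, kills the off-diagonal blocks of $\psi$; thus the space of admissible $\psi$ is the direct sum, over the simple summands, of the spaces $\{\psi:F_\psi=0\}$.

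The crux — and the step I expect to be the real obstacle — is therefore the evaluation of $\{\psi\in S^2(\mathfrak g_0,K):F_\psi=0\}$ for a simple $\mathfrak g_0$. Setting $\psi(x,y)=\langle Sx,y\rangle$ with $S$ self-adjoint for the Killing form, invariance and nondegeneracy turn $F_\psi=0$ into the single operator identity
\[
S[x,y]+[Sx,y]+[x,Sy]=0\qquad(x,y\in\mathfrak g_0),
\]
which says exactly that $S$ annihilates the Cartan $3$-form $c_0(x,y,z)=\langle[x,y],z\rangle$ under the natural action of $\mathfrak{gl}(\mathfrak g_0)$ on $\Lambda^3\mathfrak g_0^{*}$. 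This is where $sl(2)$ is singled out: when $\dim\mathfrak g_0=3$ the form $c_0$ is a volume form, the identity degenerates to $\operatorname{tr}S=0$, and the self-adjoint traceless operators give a $5$-dimensional space which in the basis~(\ref{sl2-basis}) is cut out by~(\ref{sl2-form}). For $\dim\mathfrak g_0>3$ I must prove that the only self-adjoint solution is $S=0$; I would split off the semisimple and nilpotent Jordan parts of $S$ (each is again a self-adjoint stabilizer, the stabilizer being algebraic and the adjoint operation preserving Jordan decomposition) and eliminate each by a root-space computation, the eigenvalue relation $S[v,w]=-(\lambda+\mu)[v,w]$ being incompatible with the root system of a simple algebra of rank $\ge 2$. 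Combining the two summands then gives the stated isomorphism for $H^2(\mathfrak g\otimes tK[t],K)$.
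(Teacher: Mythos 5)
Your overall strategy is the same as the paper's: apply Theorem~\ref{h2} with $L=\mathfrak g$, $A=tK[t]$, use Remark~\ref{indep} to split off the summand containing all coboundaries, classify the bilinear forms on $tK[t]$ (your list is correct, and the paper obtains the same facts, proving $HC^1(tK[t])=0$ either by cyclic-homology formalism or by the direct computation you invoke), and reduce everything to two statements about a simple summand $\mathfrak g_0$: vanishing of skew-symmetric cyclic forms, and the computation of symmetric $\psi$ with $F_\psi=0$. Your treatment of types (i), (iii), (iv), the coboundary analysis yielding $C^2(\mathfrak g,K)/B^2(\mathfrak g,K)$, and the reduction to simple summands are all correct; your identification of the $sl(2)$ case with self-adjoint \emph{traceless} operators (the Cartan $3$-form being a volume form in dimension $3$) is a clean derivation of the $5$-dimensional space and of condition~(\ref{sl2-form}), arguably tidier than the paper's ``direct calculation''.

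However, there is a genuine gap exactly where you anticipate ``the real obstacle'': for simple $\mathfrak g_0$ with $\dim\mathfrak g_0>3$ you must show that the only self-adjoint solution of $S[x,y]+[Sx,y]+[x,Sy]=0$ is $S=0$, and what you offer is a plan (Jordan decomposition plus ``a root-space computation''), not a proof. This is not a routine verification: such operators are precisely the \emph{antiderivations} of $\mathfrak g_0$, and the vanishing statement for simple Lie algebras of dimension $>3$ is a theorem of Hopkins \cite[Theorem 5.1]{hopkins}, with generalizations by Filippov \cite{filippov} and Leger--Luks \cite{leger-luks}; its proof requires real work, and your sketch does not say how the relation $[\mathfrak g_\lambda,\mathfrak g_\mu]\subseteq\mathfrak g_{-\lambda-\mu}$ produces a contradiction, nor how the nilpotent Jordan part is eliminated. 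The paper's Lemma consists exactly of converting $\psi$ into an antiderivation via $\mathfrak g\simeq\mathfrak g^*$ and citing Hopkins. A second, smaller instance of the same kind occurs in your type~(ii) argument for the first summand: from $[Ry,y]=0$ for all $y$ you conclude ``$R$ is scalar'', but your intermediate assertion that ``$Ry$ lies in the centralizer of every $y$'' is not what $[Ry,y]=0$ says, and the scalar conclusion is again a nontrivial known result (Benoist's Lemme 2 \cite{benoist}, or \cite{leger-luks}), which the paper cites. Either cite these results, as the paper does, or prove them; as written, the argument for why $sl(2)$ is the unique exception --- the heart of the theorem --- is missing.
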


\begin{proof}
Consider the cocycles appearing in Theorem \ref{h2} for $L =
\mathfrak g$ and $A = tK[t]$ case-by-case.

\textit{Cocycles of type} (i). Writing the cyclicity condition of
$\alpha$ for the triple $t^{i-1}, t^j, t$, we get
\begin{equation}\label{alpha}
\alpha (t^{i+j-1}, t) = \alpha (t^i, t^j)
\end{equation}
for any $i \ge 2, j \ge 1$.

(ia). $\varphi$ is skew-symmetric, thus $\varphi \in Z^2(\mathfrak
g,K)$, and $\alpha$ is symmetric. Since $H^2(\mathfrak g,K) = 0$, it
follows that $\varphi = d\omega$ for some linear map $\omega:
\mathfrak g\to K$. Define a linear map $\Omega: \mathfrak g \otimes
tK[t] \to K$ by setting, for any $x\in \mathfrak g$,
$$
\Omega (x \otimes t^i) = \begin{cases}
0                            &\text{if }  i = 1, \\
\omega(x) \alpha (t^{i-1},t) &\text{if }  i \ge 2.
\end{cases}
$$
Then, taking (\ref{alpha}) into account, we get $\varphi \otimes
\alpha = d\Omega$, i.e., cocycles of this type are trivial.

(ib). $\varphi$ is symmetric, $\alpha$ is skew-symmetric.

If $\mathfrak g = sl(2)$, direct calculation shows that the space of
corresponding $\varphi$'s coincides with the space of all symmetric
bilinear forms on $sl(2)$ satisfying the condition (\ref{sl2-form}),
and hence is $5$-dimensional (an equivalent calculation is contained
in \cite[Theorem 6.5]{alia-d}). This case is exceptional, as shows
the following

\begin{lemma}[Dzhumadil'daev--Bakirova]\label{db}
Let $\mathfrak g\not\simeq sl(2)$ be simple. Then any symmetric
bilinear form $\varphi$ on $\mathfrak g$ satisfying
\begin{equation}\label{cocycle}
\varphi([x,y],z) + \varphi([z,x],y) + \varphi([y,z],x) = 0
\end{equation}
for any $x,y,z\in \mathfrak g$ vanishes.
\end{lemma}

This Lemma is proved in \cite{alia-db} by considering the Chevalley
basis of $\mathfrak g$ and performing computations with the
corresponding root system. In \cite{alia-d} and \cite{alia-db},
symmetric bilinear forms satisfying the condition (\ref{cocycle})
are called  \textit{commutative $2$-cocycles} and arise naturally in
connection with classification of algebras satisfying skew-symmetric
identities. We will give a different proof which stresses the
connection with yet other notions and results.

\begin{proof} Consider a map from the space of
bilinear forms on a Lie algebra $L$ to the space of linear maps from
$L$ to $L^*$, by sending a bilinear form $\varphi$ to the linear map
$D: L \to L^*$ such that ${D(x)(y) = \varphi(x,y)}$. It is easy to see
that a symmetric bilinear form $\varphi$ satisfying (\ref{cocycle})
maps to a linear map $D$ satisfying
$$
D([x,y]) = - y \bullet D(x) + x \bullet D(y) ,
$$
where $\bullet$ denotes the standard $L$-action on the dual module
$L^*$ (this is completely analogous to the embedding of $H^2(L,K)$
into $H^1(L,L^*)$ mentioned in \S \ref{section-der}).

For any finite dimensional simple Lie algebra, there is an
isomorphism of $\mathfrak g$-modules $\mathfrak g \simeq \mathfrak
g^*$, and we have an embedding of the space of bilinear forms in
question into the space of linear maps $D:\mathfrak g \to \mathfrak
g$ satisfying the condition
$$
D([x,y]) = -[D(x),y] - [x,D(y)] .
$$
Such maps are called \textit{antiderivations} and were studied in
several papers. In particular, in \cite[Theorem 5.1]{hopkins} it is
proved that central simple classical Lie algebras of dimension $>3$
do not have nonzero antiderivations (generalizations of this result
in different directions obtained further in the series of papers by
Filippov, of which \cite{filippov} is the latest, and in
\cite{leger-luks}). Hence, these Lie algebras do not have nonzero
symmetric bilinear forms satisfying (\ref{cocycle}) either.
\end{proof}

Coupled with the well-known fact that $H^2(\mathfrak g,K)=0$, Lemma \ref{db} 
implies that \textit{any} (symmetric, skew-symmetric or mixed) bilinear form 
$\varphi$ on $\mathfrak g \not\simeq sl(2)$ satisfying the cocycle equation 
(\ref{cocycle}), is a $2$-coboundary\footnote{
Added December 26, 2017: The statement is true, but it does not follow, at 
least in a straightforward way, from Lemma \ref{db} and the fact that 
$H^2(\mathfrak g,K)=0$. For a correct proof, see a forthcoming text (with 
A. Makhlouf) ``Hom-Lie algebra structures on Kac-Moody algebras''.
}.
This can be also compared with the fact that Leibniz cohomology
(and, in particular, the second Leibniz cohomology  with trivial
coefficients) of $\mathfrak g$ vanishes (for homological version,
see \cite[Proposition 2.1]{pirashvili} or \cite{ntolo}). The
condition for a bilinear form $\varphi$ to be a Leibniz
$2$-cocycle can be expressed as
\begin{equation*}
\varphi([x,y],z) + \varphi([z,x],y) - \varphi(x,[y,z]) = 0 .
\end{equation*}

In the general case, where $\mathfrak g$ is a direct sum of simple
ideals $\mathfrak g_1 \oplus \dots \oplus \mathfrak g_n$, it is easy
to see that $\varphi(\mathfrak g_i, \mathfrak g_j) = 0$ for $i\ne
j$, and hence the space of symmetric bilinear forms on $\mathfrak g$
satisfying ($\ref{cocycle}$) decomposes into the direct sum of
appropriate spaces on each of $\mathfrak g_i$. The latter are
determined by (\ref{sl2-form}) if $\mathfrak g_i \simeq sl(2)$ and
vanishes otherwise.

Now, turning to $\alpha$'s, and permuting $i$ and $j$ in (\ref{alpha}), 
we get $\alpha (t^i, t^j) = 0$ for all $i,j \ge 2$ and
$\alpha(t^k, t) = 0$ for all $k\ge 3$. Conversely, it is easy to see
that a skew-symmetric map $\alpha$ satisfying these conditions is
cyclic. Hence, the space of skew-symmetric cyclic maps on $tK[t]$ is
1-dimensional and each cocycle of this type can be written in the
form (\ref{cocycles-sl2}).

\medskip\textit{Cocycles of type} (ii).

(iia). $\varphi$ is skew-symmetric, $\alpha$ is symmetric.

\begin{lemma}\label{skew-symm-cyclic}
Any skew-symmetric cyclic bilinear form on $\mathfrak g$ vanishes.
\end{lemma}

\begin{proof}
The proof is almost identical to the proof of the well-known fact
that any skew-symmetric invariant bilinear form on $\mathfrak g$ vanishes
(see, for example, \cite[Chapter 1, \S 6, Exercises 7(b) and
18(a,b)]{bourbaki}). Namely, let $\varphi$ be a skew-symmetric
cyclic form on $\mathfrak g$. First, let $\mathfrak g$ be simple.
There is a linear map $\sigma: \mathfrak g \to \mathfrak g$ such
that $\varphi(x,y) = \langle \sigma(x),y \rangle$, where
$\langle\cdot,\cdot\rangle$ is the Killing form on $\mathfrak g$.
Then, for any $x,y,z\in \mathfrak g$:
$$
\langle\sigma([x,y]),z\rangle = \varphi([x,y],z) = -
\varphi(y,[z,x]) = -\langle\sigma(y),[z,x]\rangle =
-\langle[x,\sigma(y)],z\rangle.
$$
Hence $\sigma$ anticommutes with each $ad\,x$ and, in particular,
$[\sigma(x),x] = 0$ for any $x\in \mathfrak g$. But then by
\cite[Lemme 2]{benoist} (or by more general results from
\cite{leger-luks}), $\sigma$ belongs to the centroid of $\mathfrak
g$. Hence $\sigma$ is a scalar and necessarily vanishes.

In the general case where $\mathfrak g$ is the direct sum of simple
ideals $\mathfrak g_1 \oplus \dots \oplus \mathfrak g_n$, it is easy
to see that $\varphi(\mathfrak g_i,\mathfrak g_j) = 0$ for any $i\ne
j$. But by just proved $\varphi$ vanishes also on each $\mathfrak
g_i$, and hence vanishes on the whole of $\mathfrak g$.
\end{proof}

(iib). $\varphi$ is symmetric and $\alpha$ is skew-symmetric, so
$\alpha \in HC^1(tK[t])$. There is an isomorphism of graded algebras
\begin{equation*}
HC^*(K[t]) \simeq HC^*(K) \oplus HC^*(tK[t])
\end{equation*}
(for a general relationship between cyclic homology of augmented
algebra and its augmentation ideal, see \cite[\S 4]{loday-quillen};
the cohomological version can be obtained in exactly the same way).
On the other hand,
\begin{equation*}
HC^*(K[t]) \simeq HC^*(K) \oplus \text{(terms concentrated in degree
$0$)}
\end{equation*}
(see, for example, \cite[\S 3.1.7]{loday}). Hence $HC^1(tK[t]) = 0$.
Of course, the vanishing of $HC^1(tK[t])$ can be established also by
direct easy calculations.

\textit{Cocycles of type} (iii) vanish.

\textit{Cocycles of type} (iv). Obviously, $\alpha(t^i,t^j) = 0$ for
$(i,j) \ne (1,1)$. Hence $\alpha$ is symmetric, and each cocycle of
this type has the form (\ref{cocycles}).

To summarize: cocycles of type (ia), (ii) and (iii) either are
trivial or vanish, and cocycles of type (ib) are given by formula
(\ref{cocycles-sl2}) and vanish if $\mathfrak g$ does not contain
direct summands isomorphic to $sl(2)$. So in the latter case, all
nontrivial cocycles are of type (iv) and given by formula
(\ref{cocycles}). Considering the natural grading in $\mathfrak g
\otimes tK[t]$ by degrees of $t$, observing that the second
cohomology is finite-dimensional and hence is dual to the second
homology, and turning to interpretation of the $2$-cycles as
relations between generators, we get the assertion proved in
\cite{larsson}: that $\mathfrak g \otimes tK[t]$ admits a
presentation with quadratic relations, provided $\mathfrak g$ does
not contain direct summands isomorphic to $sl(2)$.

Let us decide now when cocycles given by (\ref{cocycles}) and
(\ref{cocycles-sl2}) are cohomologically independent. According to
Remark \ref{indep} in \S \ref{cohomology}, any cohomological
dependency beyond linear dependency can occur for cocycles of type
(\ref{cocycles}) only. Writing, as in Remark \ref{indep}, $\Phi =
d\Omega$ for $\Omega = \mathop{\sum}\limits_{i\in I} \omega_i
\otimes \beta_i$, where $\omega_i: \mathfrak g \to \mathfrak g$ and
$\beta_i: tK[t] \to tK[t]$ are some linear maps, we get:
\begin{equation*}
\mathop{\sum}\limits_{i\in I} \omega_i([x,y]) \beta_i(t^2) =
\varphi(x,y)
\end{equation*}
and
\begin{equation*}
\mathop{\sum}\limits_{i\in I} \omega_i([x,y]) \beta_i(t^k) = 0
\end{equation*}
for $k\ge 3$. Hence it is clear that cocycles of type
(\ref{cocycles}) are cohomologically independent if and only if the
corresponding skew-symmetric bilinear forms are independent modulo
$2$-coboundaries.

\end{proof}

In \cite{larsson}, the author speculates about the possibility to
derive Theorem \ref{periodis} from the standard presentation of the
affine Kac-Moody algebra. Let us indicate briefly how one can do the
opposite: namely, how Theorem \ref{periodis} allows us to recover the
Serre relations between Chevalley generators of non-twisted affine
Kac-Moody algebras.

Let $\mathfrak g$ be a finite-dimensional simple Lie algebra over an
algebraically closed field $K$ of characteristic $0$, and $\widehat
L = \mathfrak g \otimes K[t,t^{-1}]$ be a loop algebra, aka ``centerless,
derivation-free'' part of an affine non-twisted Kac-Moody algebra. It
is well-known that it admits a triangular decomposition which, with
slight rearrangements of terms, can be written in the form
\begin{equation*}
\widehat L = \Big( (\mathfrak g\otimes t^{-1} K[t^{-1}]) \oplus
(\mathfrak n_- \otimes 1) \Big) \oplus \Big( \mathfrak h \otimes 1
\Big) \oplus \Big( (\mathfrak g\otimes tK[t]) \oplus (\mathfrak n_+
\otimes 1) \Big),
\end{equation*}
where $\mathfrak g = \mathfrak n_- \oplus \mathfrak h \oplus
\mathfrak n_+$ is the triangular decomposition of the simple Lie
algebra $\mathfrak g$ (\cite[\S 7.6]{kac}; all direct sums are
direct sums of vector spaces).

Consider the Hochschild--Serre spectral sequence abutting to
$H^*((\mathfrak g\otimes tK[t]) \oplus (\mathfrak n_+ \otimes 1),
K)$ with respect to the ideal $\mathfrak g\otimes tK[t]$. The
general result implicitly contained in \cite{hs} and explicitly, for
example, in \cite[Lemma 1]{rozen}, tells that this spectral sequence
degenerates at the $E_2$ term. For the sake of simplicity, let us
exclude the case where $\mathfrak g = sl(2)$. Then the $E_2$ terms
affecting the second cohomology in question are:
\begin{align*}
E_\infty^{02} = E_2^{02} &= H^2 (\mathfrak n_+, H^0 (\mathfrak
g\otimes tK[t], K)) \simeq
                            H^2 (\mathfrak n_+, K),            \\
E_\infty^{11} = E_2^{11} &= H^1 (\mathfrak n_+, H^1 (\mathfrak
g\otimes tK[t], K)) \simeq
                            H^1 (\mathfrak n_+, \mathfrak g),  \\
E_\infty^{20} = E_2^{20} &= H^0 (\mathfrak n_+, H^2 (\mathfrak
g\otimes tK[t], K)) \simeq
                            (C^2(\mathfrak g, K) / B^2(\mathfrak g, K))^{\mathfrak n_+}.
\end{align*}
The first and second isomorphisms here are obvious, the third one
follows from Theorem \ref{periodis}. Here the first term corresponds
to relations between elements of $\mathfrak n_+ \otimes 1$, which
are (classical) Serre relations for the finite-dimensional Lie
algebra $\mathfrak g$, the second term corresponds to relations
between elements of $\mathfrak g\otimes tK[t]$ and $\mathfrak n_+
\otimes 1$, and the third one corresponds to relations between
elements of $\mathfrak g\otimes tK[t]$. Expressing these elements by means of 
Chevalley generators in terms of the corresponding Cartan matrix, we get the
corresponding part of the Serre relations.

Repeating similar reasonings for the ``minus'' part, and
completing relations in an obvious manner between the ``plus'' and
``minus'' parts and the ``Cartan subalgebra'' $\mathfrak h \otimes
1$, we get the complete set of defining relations for $\widehat L$.
The whole affine non-twisted Kac-Moody algebra is obtained from
$\widehat L$ by the well-known construction which adds central
extension and derivation, and its presentation readily follows from
the presentation of $\widehat L$.

This approach is by no means new. For example, we find in
\cite[Remark in \S 2]{leites}: ``Similar calculations by induction
on the rank for simple finite-dimensional and loop algebras give the
shortest known to us proof of completeness of the Serre defining
relations''. ``Induction on the rank'' means induction with
repetitive application of the Hochschild--Serre spectral sequence
relative to a Kac-Moody algebra build upon the simple
finite-dimensional subalgebra of $\mathfrak g$. Here we use the
Hochschild-Serre spectral sequence in a different way, and only
once, thus getting even shorter proof.

It is mentioned in \cite[\S 9.16]{kac} that ``a simple cohomological
proof'' of the completeness of the Serre defining relations ``was
found by O. Mathieu (unpublished)''. We presume that the approach
outlined here is similar to that unpublished proof.

\section*{Acknowledgements}

Thanks are due to Semyon Konstein, Dimitry Leites, and Irina Shchepochkina for a 
careful reading of preliminary versions
of the manuscript and many suggestions which improved the presentation,
to Askar Dzhumadil'daev for drawing attention to papers \cite{alia-d}
and \cite{alia-db} which provide an interesting connection between
the topic of this paper and classification of skew-symmetric
identities of algebras, 
to Clas L\"ofwall for drawing attention to the thesis \cite{larsson-thesis},
to Anna Tihinen (Larsson) for supplying it, 
and to Rutwig Campoamor-Stursberg for supplying some other needed literature.


\begin{thebibliography}{NW}

\bibitem[BB]{bb} I. Bajo and S. Benayadi,
\emph{Lie algebras admitting a unique quadratic structure}, Comm.
Algebra \textbf{25} (1997), 2795--2805.

\bibitem[Be]{benoist} Y. Benoist,
\emph{La partie semi-simple de l'alg\`ebre des d\'erivations d'une
alg\`ebre de Lie nilpotente}, Comptes Rendus Acad. Sci. Paris
\textbf{307} (1988), 901--904.

\bibitem[BK]{berezin} F. Berezin and F.I. Karpelevich, 
\emph{Lie algebras with supplementary structure},
Math. USSR Sbornik \textbf{6} (1968), 185--203.

\bibitem[Bo]{bourbaki} N. Bourbaki, 
\emph{Lie Groups and Lie Algebras}, Chapters 1--3, Springer, 1989.

\bibitem[D1]{dzhu} A.S. Dzhumadil'daev, \emph{Symmetric (co)homologies of Lie algebras},
Comptes Rendus Acad. Sci. Paris \textbf{324} (1997), 497--502.

\bibitem[D2]{alia-d} \bysame, \emph{Algebras with skew-symmetric identity of degree $3$},
J. Math. Sci. \textbf{161} (2009), 11--30.

\bibitem[DB]{alia-db} \bysame{} and A.B. Bakirova, 
\emph{Simple two-sided anti-Lie-admissible algebras}, 
J. Math. Sci. \textbf{161} (2009), 31--36.

\bibitem[FF]{feigin-fuchs} B.L. Feigin and D.B. Fuchs,
\emph{Cohomology of some nilpotent subalgebras of the Virasoro and
Kac-Moody algebras}, J. Geom. Phys. \textbf{5} (1988), 209--235.

\bibitem[F]{filippov} V.T. Filippov, \emph{$\delta$-derivations of prime Lie algebras},
Siber. Math. J. \textbf{40} (1999), 174--184.

\bibitem[G]{garland} H. Garland,
\emph{Dedekind's $\eta$-function and the cohomology of infinite
dimensional Lie algebras}, Proc. Nat. Acad. Sci. USA \textbf{72}
(1975), 2493--2495.

\bibitem[GL]{garland-lepowsky} \bysame{ }and J. Lepowsky,
\emph{Lie algebra homology and the Macdonald-Kac formulas}, Invent.
Math. \textbf{34} (1976), 37--76.

\bibitem[Ha]{haddi} A. Haddi,
\emph{Homologie des alg\`ebres de Lie \'etendues \`a une alg\`ebre
commutative}, Comm. Algebra \textbf{20} (1992), 1145--1166.

\bibitem[HW]{hanlon-wales} P. Hanlon and D.B. Wales,
\emph{On the homology of $sl_n(t \mathbb C[t])$ and a theorem of
Stembridge}, J. Algebra \textbf{269} (2003), 1--17.

\bibitem[HS]{hs} G. Hochschild and J.-P. Serre, \emph{Cohomology of Lie algebras},
Ann. Math. \textbf{57} (1953), 591--603.

\bibitem[Ho]{hopkins} N.C. Hopkins,
\emph{Generalized derivations of nonassociative algebras}, Nova J.
Math. Game Theory Algebra \textbf{5} (1996), 215--224.

\bibitem[K]{kac} V. Kac, \emph{Infinite Dimensional Lie Algebras}, 3rd ed.,
Cambridge Univ. Press, 1995.

\bibitem[La1]{larsson} A. Larsson, \emph{A periodisation of semisimple Lie algebras},
Homology, Homotopy and Appl. \textbf{4} (2002), 337--355.

\bibitem[La2]{larsson-thesis} \bysame, 
\emph{Periodisations of Contragredient Lie Superalgebras and Their Presentations},
PhD thesis, Stockholm University, 2003.

\bibitem[LL]{leger-luks} G.F. Leger and E.M. Luks,
\emph{Generalized derivations of Lie algebras}, J. Algebra
\textbf{228} (2000), 165--203.

\bibitem[LP]{leites} D. Leites and E. Poletaeva,
\emph{Defining relations for classical Lie algebras of polynomial
vector fields}, Math. Scand. \textbf{81} (1997), 5--19;
\textsf{arXiv:math/0510019}.

\bibitem[Lo]{loday} J.-L. Loday, \emph{Cyclic Homology}, 2nd ed., Springer, 1998.

\bibitem[LQ]{loday-quillen} \bysame{ }and D. Quillen,
\emph{Cyclic homology and the Lie algebra homology of matrices},
Comment. Math. Helv. \textbf{59} (1984), 565--591.

\bibitem[LR]{lofwall-roos} C. L\"ofwall and  J.-E. Roos,
\emph{A nonnilpotent 1-2-presented graded Hopf algebra whose Hilbert series converges
in the unit circle}, Adv. Math. \textbf{130} (1997), 161--200.

\bibitem[NW]{neeb} K.-H. Neeb and F. Wagemann,
\emph{The second cohomology of current algebras of general Lie
algebras}, Canad. J. Math. \textbf{60} (2008), 892--922;
\textsf{arXiv:math/0511260}.

\bibitem[N]{ntolo} P. Ntolo,
\emph{Homologie de Leibniz d'alg\`ebres de Lie semi-simples},
Comptes Rendus Acad. Sci. Paris \textbf{318} (1994), 707--710.

\bibitem[P]{pirashvili} T. Pirashvili, \emph{On Leibniz homology},
Ann. Inst. Fourier \textbf{44} (1994), 401--411.

\bibitem[R]{rozen} B.I. Rozenfeld,
\emph{Cohomology of the algebra of formal universal differential
operators}, Funct. Anal. Appl. \textbf{9} (1975), 126--130.

\bibitem[S]{santharoubane} L.J. Santharoubane,
\emph{The second cohomology group for Kac-Moody Lie algebras and
K\"ahler differentials}, J. Algebra \textbf{125} (1989), 13--26.

\bibitem[Z1]{asterisque} P. Zusmanovich,
\emph{The second homology group of current Lie algebras},
Ast\'erisque \textbf{226} (1994), 435--452;
\textsf{arXiv:0808.0217}.

\bibitem[Z2]{low} \bysame,
\emph{Low-dimensional cohomology of current Lie algebras and analogs
of the Riemann tensor for loop manifolds}, Lin. Algebra Appl.
\textbf{407} (2005), 71--104; \textsf{arXiv:math/0302334}.

\end{thebibliography}
\end{document}